\numberwithin{equation}{section}
\newtheorem{theorem}{Theorem}[section]
\newtheorem{proposition}{Proposition}[section]
\newtheorem{lemma}{Lemma}[section]
\newtheorem{corollary}{Corollary}[section]
\newcommand{\ov}[1]{\overline{#1}}
\newcommand{\ve}{\varepsilon}
\theoremstyle{definition}
\theoremstyle{remark}
\begin{document}
\bibliographystyle{amsplain}

\title{The insulated conductivity problem, \\ effective gradient estimates \\ and the maximum principle}

\author[B. Weinkove]{Ben Weinkove}
\address{Department of Mathematics, Northwestern University, 2033 Sheridan Road, Evanston, IL 60208, USA.}

\thanks{Research supported in part by   NSF grant DMS-2005311. Part of this work was carried out while the author was visiting the Department of Mathematical Sciences at the University of Memphis whom he thanks for  their kind support and hospitality.}

\begin{abstract}
We consider  the insulated conductivity problem with two unit balls as insulating inclusions, a distance of order $\varepsilon$ apart.  The solution $u$ represents the electric potential.  In dimensions $n \ge 3$ it is an open problem to find the optimal bound on the gradient of $u$, the electric field,  in the narrow region between the insulating bodies.    Li-Yang recently proved a bound of order $\varepsilon^{-(1-\gamma)/2}$ for some $\gamma>0$.   In this paper we use a direct maximum principle argument to sharpen the Li-Yang estimate for $n \ge 4$.   Our method gives effective lower bounds on $\gamma$, which in particular approach $1$ as $n$ tends to infinity.
\end{abstract}

\maketitle

\section{Introduction}

We consider the following problem.  Let $B^+, B^- \subset \mathbb{R}^n$, for $n\ge 2$,  be two closed balls of radius 1 centered at $(0, \ldots, 0, 1+\ve)$ and $(0, \ldots, 0, -1-\ve)$ respectively.  Let $\Omega \subset \mathbb{R}^n$ be a bounded domain containing the convex hull of $B^+$ and $B^-$.  Assume that the boundary $\partial \Omega$  is $C^{1,\alpha}$, for a fixed $0<\alpha<1$,  Define $\Omega'= \Omega \setminus (B^+ \cup B^-)$.   See Figure \ref{figure1}.

Let $u: \overline{\Omega'} \rightarrow \mathbb{R}$ be the solution  of the \emph{insulated conductivity problem}:
\begin{equation} \label{mainequation}
\left\{ \begin{split} \Delta u = {} & 0, \quad \textrm{in } \Omega' \\ \partial_{\nu} u = {} & 0, \quad \textrm{on } \partial B^+ \cup \partial B^- \\
u= {} & \varphi, \quad \textrm{on } \partial \Omega, \end{split} \right.
\end{equation}
for a given function $\varphi \in C^{1,\alpha}(\partial \Omega)$.   Here $\partial_{\nu}$ denotes the normal derivative on $\partial B^+\cup \partial B^-$.  It is well-known that there is a unique such solution $u$ which is smooth on $\Omega'\cup \partial B^+\cup \partial B^-$  and $C^{1, \alpha}$ on $\overline{\Omega'}$.

The solution $u$ represents the electric potential in the domain $\Omega'$ with insulating inclusions $B^+$ and $B^-$.  There is interest from the perspective of engineering  in estimating the magnitude of the electric field $\nabla u$ in the narrow region between  $B^+$ and $B^-$ .   
  An open problem is to find the optimal upper bound on $|\nabla u|_{L^{\infty}(\Omega')}$ in terms of $\ve$.   We refer the reader to \cite{BASL, BV, BC, K, LN, LV, M}
 and the references therein for background and early work on this problem.
  
Ammari-Kang-Lee-Lee-Lim \cite{AKLLL} proved in dimension $n=2$ an upper bound for $|\nabla u|$ of the order $\ve^{-1/2}$, although with the condition $u=\varphi$ on $\partial \Omega$ replaced by a condition of being asymptotic to a harmonic function at infinity.  Later, in the context of (\ref{mainequation}), Bao-Li-Yin \cite{BLY} proved the following estimate for $n \ge2$:
\begin{equation} \label{BLY}
| \nabla u |(x)\le  \frac{C\| \varphi \|_{C^{1,\alpha}(\partial \Omega)} }{(\ve+|x'|^2)^{1/2}}, \quad \textrm{for } x\in \Omega',
\end{equation}
where $C$ depends only on $n$ and $\Omega$.   Here we are writing
$$x' = (x_1, \ldots, x_{n-1}), \quad |x'| = \sqrt{x_1^2+ \cdots + x_{n-1}^2}.$$

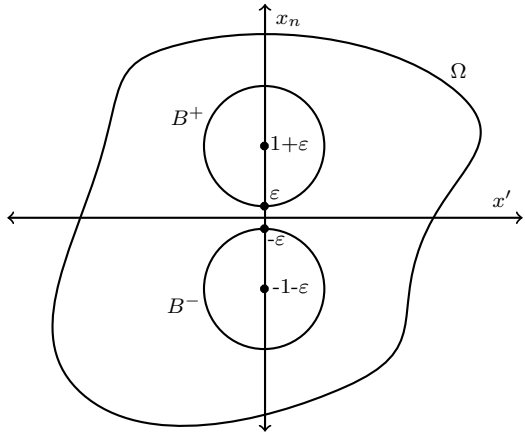
\begin{figure}[h] 
\begin{tikzpicture}
    \begin{axis}[thick,
        xmin=-4.5,xmax=4.5,
        ymin=-3.5,ymax=3.5,
       axis x line=middle,
       axis y line=middle,
        axis line style=<->,
        xlabel={$\scriptstyle x'$},
        x tick label style={major tick length=0pt},
        ylabel={$\scriptstyle x_n$},
        yticklabels={,,},
        x tick label style={major tick length=0pt},      
       xticklabels={,,}]
       \draw (3.415cm,3.8cm) circle (.8cm);
 \draw (3.415cm, 1.9cm) circle (.8cm);
 \draw [->] plot [smooth cycle, tension=0.9] coordinates { (1cm,.5cm) (4.5cm, .6cm) (5.5cm,2.5cm) (6cm, 4.5cm) (2.5cm,5.2cm) (1.2cm,3.5cm)};
      \end{axis}
    \draw [ultra thick] (3.417, 3.0) circle (.03cm);
    \node(A) at (3.56, 3.15cm) {$\scriptstyle\varepsilon$};
        \draw [ultra thick] (3.417, 2.7) circle (.03cm);
            \node(B) at (3.75, 3.82cm) {$\scriptstyle1+\varepsilon$};
        \draw [ultra thick] (3.417, 3.8) circle (.03cm);
    \node(C) at (3.58, 2.54cm) {$\scriptstyle\textrm{-}\varepsilon$};
     \draw [ultra thick] (3.417, 1.9) circle (.03cm);
    \node(C) at (3.77, 1.93cm) {$\scriptstyle\textrm{-}1\textrm{-}\varepsilon$};
\node(D) at (2.4, 4.2cm) {$\scriptstyle B^+$};
\node(E) at (2.35, 1.7cm) {$\scriptstyle B^-$};
\node(F) at (6cm, 4.8cm) {$\scriptstyle \Omega$};
\end{tikzpicture}
\caption{The domain $\Omega$ containing the balls $B^+$ and $B^-$.} \label{figure1}
\end{figure}

 Ammari-Kang-Lim \cite{AKL} showed that for $n=2$, an upper bound of order $\ve^{-1/2}$ cannot be improved, and hence (\ref{BLY}) is in that sense optimal for $n=2$.  See also \cite{ACKLY, AKLLL, DL, KLY, LYu} for work on sharper estimates, asymptotics and discussion in the case $n=2$.

However (\ref{BLY}) is not optimal for $n \ge 3$. Indeed, Yun \cite{Y3} proved an upper bound for $n=3$ of the form $|\nabla u| \le C\ve^{\frac{\sqrt{2} - 2}{2}}$ on the shortest segment connecting the two balls, under a condition that $u$ is asymptotic to a harmonic function at infinity.  He also showed that this estimate is optimal for a special choice of harmonic function.  For (\ref{mainequation}), Li-Yang \cite{LY} showed that for dimensions $n \ge 3$ the estimate (\ref{BLY}) can be sharpened to 
\begin{equation} \label{LY}
| \nabla u |(x)\le  \frac{C\| \varphi \|_{C^{1,\alpha}(\partial \Omega)} }{(\ve+|x'|^2)^{(1-\gamma)/2}}, \quad \textrm{for } x\in \Omega',
\end{equation}
for some (non-explicit) $\gamma>0$.  The optimal $\gamma$ remains unknown.

We remark that intuitively one should indeed expect stronger bounds for larger dimensions $n\ge 3$ compared to $n=2$.  In dimension two, after taking the limit as $\ve \rightarrow 0$ so that $B^+$ and $B^-$ touch, points  in $\Omega'$ with $x_1<0$ and $x_1>0$ may be close in $\mathbb{R}^2$ but ``far away''  in $\Omega'$. This is a phenomenon that does not occur for $n \ge 3$, where the extra dimensions allow for $u$ to diffuse around the intersection point.

We prove an effective version of the Li-Yang result in dimension at least four.  For $n \ge 4$, let $\gamma^*=\gamma^*(n)$ be the positive solution of the quadratic equation:
\begin{equation} \label{qe}
(n-2) (\gamma^*)^2 + (n^2-4n+5) \gamma^* - (n^2-5n+5)=0.
\end{equation}
One can check that $0<\gamma^*<1$ and $\gamma^*(n)$ increases to $1$ as $n\rightarrow \infty$.  On the other hand, (\ref{qe}) has no positive solutions for $n =2$ or $3$.  

Our main result shows that (\ref{LY}) holds for $0<\gamma<\gamma^*$.

\begin{theorem} \label{mainthm} Let $n \ge 4$ and let $u$ solve (\ref{mainequation}) as above.  Let $0<\gamma<\gamma^*(n)$.  Then 
  there exists a uniform constant $C$ depending only on $n$, $\Omega$ and $\gamma$ such that 
\begin{equation} \label{LY2}
| \nabla u |(x)\le  \frac{C\| \varphi \|_{C^{1,\alpha}(\partial \Omega)} }{(\ve+|x'|^2)^{(1-\gamma)/2}}, \quad \textrm{for } x\in \Omega'.
\end{equation}

\end{theorem}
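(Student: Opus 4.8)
The plan is to argue by contradiction via a barrier/maximum principle argument localized in the thin neck. Fix $0<\gamma<\gamma^*(n)$ and normalize so that $\|\varphi\|_{C^{1,\alpha}(\partial\Omega)}=1$. By the Bao–Li–Yin bound \eqref{BLY} and standard elliptic estimates, the claim is only in question in the region where $|x'|$ is small, say $|x'|\le 1/2$ and $|x_n|\le 1/2$; there the two spheres $\partial B^\pm$ are graphs $x_n=\pm(1+\ve-\sqrt{1-|x'|^2})=\pm\bigl(\ve+\tfrac12|x'|^2+O(|x'|^4)\bigr)$, so the neck has height $h(x')\approx \ve+|x'|^2$. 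The key observation, following the ``effective'' philosophy, is that the conormal (Neumann) condition on $\partial B^\pm$ forces the normal derivative $\partial_n u$ to be small: differentiating the boundary condition along the sphere, one gets $\partial_n u = O(|x'|)\,|\nabla' u|$ on $\partial B^\pm$, which is what lets one trade a full derivative for a gain. The heart of the matter is to estimate $w:=|\nabla' u|^2$ (or a suitable single tangential derivative $\partial_1 u$, using rotational symmetry in $x'$ to reduce to one tangential direction). Since $u$ is harmonic, each $\partial_i u$ is harmonic, so $w$ is subharmonic; the issue is the behavior on the insulating boundaries, where one must use the Neumann condition carefully to get a good sign or a controlled error for $\partial_\nu w$.

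Next I would set up the barrier. On the cylinder $Q_r=\{|x'|<r,\ |x_n|<h(x')\}$ one seeks a supersolution of the form
\begin{equation*}
\Phi(x) = A\, (\ve+|x'|^2)^{-\beta} \bigl(1 + \sigma\, \psi(x',x_n)\bigr),
\end{equation*}
where $\beta = 1-\gamma$ is the target exponent for $|\nabla u|$ (so $2\beta$ for $w$, up to the reduction), $\psi$ is a small correction of size comparable to $x_n^2/h(x')^2$ chosen to absorb the Neumann defect on $\partial B^\pm$, and $A$ is fixed from the boundary comparison on $\{|x'|=r\}$ using \eqref{BLY}. Plugging $\Phi$ into $\Delta$ and into the conormal operator on the curved boundary produces, after expanding in $|x'|$ and $x_n$, an inequality that holds precisely when a quadratic expression in $\beta$ (equivalently in $\gamma$) has the right sign. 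This is where the quadratic \eqref{qe} emerges: the leading terms of $\Delta\Phi$ in the neck, balanced against the contribution from the Neumann condition on the two nearly-flat spheres, give exactly $(n-2)\gamma^2+(n^2-4n+5)\gamma-(n^2-5n+5)$, so one needs $\gamma<\gamma^*(n)$ for the barrier to be a genuine supersolution. The maximum principle on $Q_r$ (comparing $w$, or $\partial_1 u$, against $\pm\Phi$, with the boundary data on $|x'|=r$ handled by \eqref{BLY} and on $|x_n|=h$ handled by the Neumann analysis) then yields $|\nabla u|\lesssim (\ve+|x'|^2)^{-(1-\gamma)/2}$ in the neck, which combined with \eqref{BLY} away from the neck gives \eqref{LY2}.

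There is a bookkeeping subtlety I would handle before the barrier step: to run the maximum principle cleanly one wants a scalar quantity satisfying an elliptic inequality with a usable boundary condition, and $|\nabla' u|^2$ is convenient for the interior (subharmonic) but its conormal derivative on $\partial B^\pm$ is not obviously signed. I expect the right move is either (i) to work with the single function $\partial_1 u$ after using the $O(n-1)$ symmetry of the configuration in the $x'$ variables to align a tangential direction, noting $\partial_1 u$ is harmonic and satisfies a perturbed Neumann condition $\partial_\nu(\partial_1 u)=(\text{curvature terms})\cdot\nabla u$ on $\partial B^\pm$; or (ii) to absorb these lower-order boundary errors into the $\sigma\psi$ correction term of the barrier, which is the standard trick in these neck estimates. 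Either way, the \emph{main obstacle} is getting the constants in this boundary expansion sharp enough that the resulting admissible range of $\gamma$ is exactly $(0,\gamma^*)$ with $\gamma^*$ solving \eqref{qe} — i.e., not losing anything in the interplay between the $(\ve+|x'|^2)^{-\beta}$ scaling of the Laplacian in the neck and the $O(|x'|)$ smallness of $\partial_n u$ coming from the insulating condition. Once the supersolution inequality reduces to the sign of that specific quadratic, the rest is the routine comparison argument plus patching with \eqref{BLY}.
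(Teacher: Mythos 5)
Your overall strategy---a localized maximum principle/barrier argument in the thin neck, reducing to the Bao--Li--Yin bound on $\{|x'|=r\}$---is the same as the paper's. But the specific mechanism you propose is not the one that works, and the step you flag as the ``main obstacle'' is where the gap actually lies. First, you propose to control $|\nabla' u|^2$ or $\partial_1 u$ rather than $|\nabla u|^2$, and your key input is $\partial_n u = O(|x'|)|\nabla' u|$ on $\partial B^\pm$. That inequality is true (the paper uses it, but only for the uniform $u_n$ bound in Corollary~\ref{cor}), but it is not what drives the main argument. The paper works with the full $|\nabla u|^2$ and uses the exact identity $\partial_\nu |\nabla u|^2 = 2|\nabla u|^2$ on $\partial B^+$ (Lemma~\ref{bdl}), which comes from differentiating the Neumann condition along the spherical boundary and has a definite, but \emph{unfavorable}, sign: it pushes the weighted quantity toward a maximum on $\partial B^\pm$. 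This is the real obstacle, and you do not resolve it. Your option (i)---use $O(n-1)$ symmetry to ``align a tangential direction'' and work with $\partial_1 u$---fails because $u$ is determined by arbitrary Dirichlet data $\varphi$ and has no rotational symmetry; there is no single harmonic function $\partial_1 u$ that captures the worst direction at every point, and the direction in which $|\nabla' u|$ is largest can rotate.

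Second, the resolution in the paper is an explicit auxiliary term $-A(bx_n^2 + |x'|^4 + \sigma)^{1-\gamma/2}$ subtracted from the weight. On $\partial B^+$, where $x_n \approx \ve + |x'|^2/2 > 0$ and $\partial_\nu x_n \approx 1$, its normal derivative is $\approx -Ab(2-\gamma)(bx_n^2+|x'|^4)^{-\gamma/2}x_n$, a strongly negative quantity that dominates the bad $+2|\nabla u|^2$ contribution provided $Ab(2-\gamma)$ is large enough (the left inequality in (\ref{Ab})). At an interior maximum, the same term contributes a bad $-Ab(2-\gamma)(|x'|^2+\sigma)^{-\gamma}$ to $\Delta Q$, so $Ab(2-\gamma)$ must also be small enough (the right inequality in (\ref{Ab})); the window for $Ab$ exists exactly when $\rho(\gamma)>0$. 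Moreover, the quadratic (\ref{qe}) does not come from ``balancing $\Delta\Phi$ against the Neumann condition'' as you suggest, but from the interior $\Delta Q$ estimate alone: $2(1-\gamma)(n-1-2\gamma)$ from the weight's Laplacian, minus $Ab(2-\gamma)$, minus $8(1-\gamma)^2\bigl(1-\tfrac{n}{4(n-1)}\bigr)$ from a Cauchy--Schwarz on the cross term $\nabla(\mathrm{weight})\cdot\nabla|\nabla u|^2$ that crucially uses $\sum_j u_{j1}^2 \le \tfrac{n-1}{n}|\nabla\nabla u|^2$ for harmonic $u$. None of this bookkeeping appears in your sketch, and without it there is no way to see why the threshold is $\gamma^*(n)$ solving (\ref{qe}) as opposed to some other exponent; your proposal asserts the quadratic emerges but does not derive it. The $\sigma\psi$ correction of size $x_n^2/h(x')^2$ you mention is also not the right scaling---the paper's correction is of relative size $O(h)$, not $O(1)$---so even the shape of the barrier would need to be rethought.
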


In the following table we give the exact and approximate numerical values $\gamma^*$ for $n=4,5,6,7$:

\medskip
\renewcommand{\arraystretch}{1.2}
\centerline{\begin{tabular}{c | c | c}
$n$ &  $\gamma^*$  & approx. \\
\hline
4 &  $\frac{1}{4}(\sqrt{33}-5)$ &  0.186 \\
5 &  $\frac{1}{3}(2\sqrt{10}-5)$ & 0.442\\
6 & $\frac{1}{8}(\sqrt{465}-17)$  & 0.570 \\
7 & $\frac{1}{5}(2 \sqrt{66}-13)$ & 0.650
 \end{tabular}}

\medskip

Our method is a direct application of the maximum principle, and can be used to give another proof of (\ref{BLY}), which we explain in Section \ref{sectionBLY} below.  Our approach differs from the works  \cite{BLY, LY} whose techniques involve dividing the domain into a large number of subdomains.  

We have no particular reason to believe that our bounds for $\gamma$ are optimal.  Indeed since our results do not recover the estimate (\ref{LY}) of Li-Yang in dimension three it is possible that our methods can be further refined.

There are various ways in which the above setup can also be generalized, as in \cite{BLY0, BLY,  LY,  Y1, Y2}:  $B^+$ and $B^-$ can be replaced by more general domains, say with strongly convex boundaries near the origin, and the equation $\Delta u=0$ can be replaced by a more general elliptic equation or system.  Although we expect our method applies also to these settings we restrict ourselves to the setup above for the sake of simplicity and to more cleanly illustrate our technique.

We now briefly describe the idea of our method.  We apply the maximum principle to a specific quantity in the narrow region between the insulators.  The quantity suggested by the estimate (\ref{LY}) is $(\ve+|x'|^2)^{(1-\gamma)}|\nabla u|^2$.  However to rule out the maximum occurring at points on $\partial B^+$ or $\partial B^-$, we  subtract a term of the form $x_n^{2-\gamma}$ whose normal derivative along this boundary has a good sign.  Unfortunately, its second derivative blows up to the order $x_n^{-\gamma}$ at an interior maximum, giving a bad term for the maximum principle near $x_n=0$.  We make an appropriate adjustment, considering the quantity:
$$( |x'|^{2-2\gamma} + \ve^{1-\gamma(1-\delta)} - A(bx_n^2+|x'|^4)^{1-\gamma/2} )|\nabla u|^2,$$
for a small $\delta>0$ and constants $A, b>0$ which are chosen to optimize the estimate.  In fact, since this quantity is not smooth at points with $x'=0$ we introduce a positive constant $\sigma>0$ to regularize it, and then later let $\sigma$ tend to zero.    We call this modified quantity $Q$ (see (\ref{Q}) below).  

In Section \ref{sectionproof} below we show that if $Q$ achieves a local maximum at a boundary point of $B^+$, say, close to the origin, then at this point,
$$0 \le \partial_{\nu} Q \le F|\nabla u|^2,$$
where $\nu$ is the inward unit normal to the ball and $F$ is a certain function involving $x, A, b$ and $\gamma$.    At an interior maximum of $Q$, close to the origin, we show that, for a different function $G$,
$$0 \ge \Delta Q \ge G |\nabla u|^2.$$
It turns out that with careful choices of $A, b$ and $\gamma$ we can show that $F<0$ and $G>0$, ruling out both of the cases above, but only when 
$$\rho:= -(n-2) \gamma^2 - (n^2-4n+5) \gamma + (n^2-5n+5)>0.$$
This restricts our result to $n \ge 4$.

 The outline of the paper is as follows.  In Section \ref{sectionprelim} we recall some well-known and elementary results.  In Section \ref{sectionproof} we give the proof of Theorem \ref{mainthm}.  Finally, in Section \ref{sectionBLY} we explain briefly how our methods can give a direct maximum principle proof of (\ref{BLY}).  

\medskip
\noindent
{\em Note.} \ Seven months after this paper was completed and posted on the arXiv, Dong-Li-Yang \cite{DLY} posted a preprint establishing  optimal estimates (in terms of the constant $\gamma$) for the insulated conductivity problem in all dimensions $n \ge 3$.  The techniques of \cite{DLY} are completely different from those introduced here.  It would be interesting to know whether the maximum principle techniques of the current paper  can be used to give another proof of the optimal Dong-Li-Yang estimates.

\section{Preliminaries} \label{sectionprelim}

Let $u$ solve (\ref{mainequation}) as in the introduction.  For a small constant $c>0$, let $B_c$ be the closed ball of radius $c$ centered at the origin.  Define $V = \ov{\Omega'\cap B_c}$ (see Figure \ref{fig2}).  This is a narrow region between the inclusions $B^+$ and $B^-$ and is the set on which  we will later need to prove estimates for $|\nabla u|$.

\begin{figure}[h]
\begin{tikzpicture}
    \begin{axis}[thick,
        xmin=-4.5,xmax=4.5,
        ymin=-3.5,ymax=3.5,
       axis x line=middle,
       axis y line=middle,
        axis line style=<->,
        xlabel={$\scriptstyle x'$},
        x tick label style={major tick length=0pt},
        ylabel={$\scriptstyle x_n$},
        yticklabels={,,},
        x tick label style={major tick length=0pt},      
       xticklabels={,,}]
       \draw (3.415cm,4cm) circle (1cm);
 \draw (3.415cm, 1.7cm) circle (1cm);
      \end{axis}
    \node(A) at (3.56, 3.15cm) {$\scriptstyle\varepsilon$};
    \node(C) at (3.58, 2.54cm) {$\scriptstyle\textrm{-}\varepsilon$};
\node(D) at (2.1, 4.2cm) {$\scriptstyle B^+$};
\node(E) at (2.1, 1.7cm) {$\scriptstyle B^-$};
\node(F) at (4.43, 2.7) {$\scriptstyle c$};
\node(G) at (2.25, 2.7) {$\scriptstyle - c$};
\node(H) at (4.37, 3.25) {$\scriptstyle V$};
\def\firstcircle{(3.415cm,4cm) circle (1cm)}
\def\secondcircle{(3.415, 2.85) circle (0.9cm)}
\def\thirdcircle{(3.415cm, 1.7cm) circle (1cm)}
\begin{scope}
\clip \thirdcircle (0,0cm) rectangle (5cm, 5cm);
\clip \firstcircle (0,0cm) rectangle (5cm, 5cm);
\draw[fill=gray]  \secondcircle;
\end{scope}
\end{tikzpicture}
\caption{The set $V= \overline{\Omega' \cap B_c}$, shown as the shaded region.}  \label{fig2}
\end{figure}
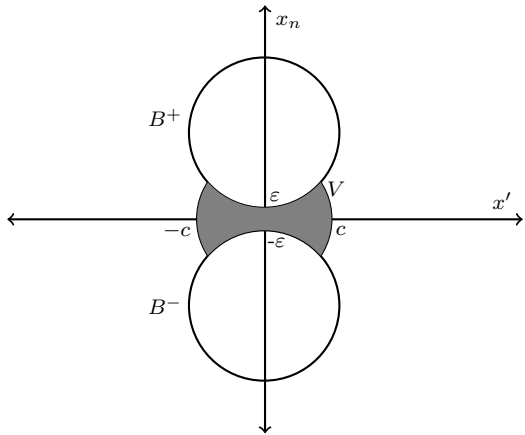

The $L^{\infty}$ norm of $u$ is bounded on $\Omega'$, and  the gradient of $u$ is bounded away from the region $V$.   More precisely, we have the following result, whose proof is well-known.

\begin{proposition}  \label{pwk} We have
\begin{enumerate}
\item[(i)] $\displaystyle{\| u \|_{L^{\infty}(\Omega')} \le \| \varphi \|_{L^{\infty}(\partial \Omega)}}$.
\smallskip

\item[(ii)]  $\displaystyle{
\| \nabla u \|_{L^{\infty}(\Omega' \setminus V)} \le C  \| \varphi \|_{C^{1,\alpha}(\partial \Omega)} },$
where $C$ depends only on $n$,  $\partial \Omega$ and $c$.
\end{enumerate}
\end{proposition}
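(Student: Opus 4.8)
The plan is to prove the two items of Proposition \ref{pwk} separately, the first by the classical maximum principle and the second by local elliptic regularity together with a reflection argument across the insulating boundaries. For item (i): since $\Delta u = 0$ in $\Omega'$ and $u$ is continuous on $\overline{\Omega'}$, the maximum principle says $u$ attains its max and min on $\partial \Omega'=\partial\Omega\cup\partial B^+\cup\partial B^-$; on $\partial B^\pm$ one has $\partial_\nu u = 0$, so by the Hopf lemma the extrema cannot be attained at an interior point of $\partial B^\pm$ (unless $u$ is constant, in which case the bound is trivial). Hence the max and min of $u$ are attained on $\partial\Omega$, where $u=\varphi$, which gives $\|u\|_{L^\infty(\Omega')}\le\|\varphi\|_{L^\infty(\partial\Omega)}$.

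For item (ii): I would cover the compact set $\overline{\Omega'}\setminus V$ by finitely many balls and estimate $|\nabla u|$ on each. Away from $\partial B^\pm$ (i.e.\ on a neighborhood compactly contained in the interior of $\Omega'$, or up to $\partial\Omega$ but away from the balls) this is standard interior/boundary Schauder or $W^{2,p}$ estimates for the harmonic function $u$ with $C^{1,\alpha}$ boundary data on the $C^{1,\alpha}$ portion $\partial\Omega$, giving $\|\nabla u\|\le C\|\varphi\|_{C^{1,\alpha}(\partial\Omega)}$ with $C$ depending on $n,\partial\Omega$ and the distance to $\partial B^\pm$, which is bounded below in terms of $c$. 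The one subtlety is near the smooth boundaries $\partial B^\pm$ with the Neumann condition $\partial_\nu u=0$: here I would use the fact that a harmonic function with zero Neumann data across a smooth (here spherical, hence real-analytic) boundary extends by reflection to a harmonic function on a full neighborhood across $\partial B^\pm$, so interior estimates apply to the extension; alternatively one cites standard Schauder estimates up to the boundary for the Neumann problem $\Delta u=0$, $\partial_\nu u = 0$. Since $\partial B^\pm\setminus V$ stays a definite distance (depending on $c$, and for small $\varepsilon$ uniformly so) from the region where the balls nearly touch and from $\partial\Omega$, the constants are uniform. Combining the finitely many local bounds and using item (i) to absorb any dependence on $\|u\|_{L^\infty}$ into $\|\varphi\|_{L^\infty(\partial\Omega)}\le\|\varphi\|_{C^{1,\alpha}(\partial\Omega)}$ yields (ii).

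The main point requiring care is the uniformity of the constant $C$ in $\varepsilon$: one must check that the geometry of $\Omega'\setminus V$ — in particular the curvature and separation of $\partial B^+$ and $\partial B^-$ outside $B_c$ — is controlled independently of $\varepsilon$ for $\varepsilon$ small, so that the finitely many Schauder constants do not degenerate. This is where the choice of the fixed radius $c$ and the hypothesis that $\Omega$ contains the convex hull of $B^+\cup B^-$ with $C^{1,\alpha}$ boundary are used. Since this is the ``well-known'' elementary part of the paper, I would present it briskly, citing standard references for Schauder estimates for harmonic functions with Dirichlet data on a $C^{1,\alpha}$ boundary and Neumann data on a smooth boundary (e.g.\ \cite{GT} or the references in \cite{BLY, LY}), rather than reproving them.
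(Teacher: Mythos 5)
Your argument matches the paper's proof essentially step for step: item (i) by the strong maximum principle plus the Hopf lemma to exclude extrema on $\partial B^\pm$, and item (ii) by local interior estimates combined with boundary Schauder estimates (Dirichlet data on $\partial\Omega$, Neumann data on $\partial B^\pm\setminus V$), citing \cite{GT}. The only variation is your aside about extending $u$ by reflection across the spheres, which is cleaner in $n=2$ than in higher dimensions; your fallback of quoting the Neumann Schauder estimate directly is exactly what the paper does (via \cite[Theorem 6.30]{GT}), so nothing is lost.
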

\begin{proof}  For (i), since $u$ satisfies $\Delta u=0$ the maximum of $u$ must be attained at a boundary point $\partial \Omega'$.   As long as $u$ is not constant, the maximum cannot occur at $p \in \partial B^+ \cup \partial B^-$ by the strong maximum principle and the Hopf Lemma (see for example \cite[p. 347]{E}).   Hence $u$ achieves its maximum at a point of $\partial \Omega$ 
 so is bounded above by  $\| \varphi \|_{L^{\infty}(\partial \Omega)}$.  The lower bound of $u$ is similar.

For part (ii), by the usual interior estimates (see for example \cite[Theorem 3.9]{GT}) it is sufficient to obtain the bound for $|\nabla u|$ on neighborhoods of points in $\partial \Omega$ and neighborhoods of those points in $\partial B^+ \cup \partial B^-$ not contained in $V$.   To obtain these boundary estimates, which are local, one can compose with diffeomorphisms to flatten the boundary, at the expense of replacing the Laplace equation $\Delta u=0$ by uniformly elliptic equations of the form 
$$Lu = a^{ij} u_{ij} + b^i u_i=0,$$
with smooth coefficients $a^{ij}, b^i$ such that $(a^{ij}) \ge c \textrm{Id}$, for $c>0$.   Here and henceforth we are using subscripts (such as in  $u_i$, $u_{ij}$) to denote partial derivatives.

Using the method of freezing coefficients, and after a linear change of coordinates, we can further reduce to the case of the Poisson equation $\Delta u = g$, where $g$ is a lower order term.   In the case of neighborhoods of points of $\partial \Omega$, we have a  Dirichlet boundary condition $u = \varphi$ on a subset of the hyperplane $x_n=0$.  By standard Schauder estimates, which can be proved using Newtonian potentials and reflection, we have a local $C^{1,\alpha}$ estimate for $u$ in terms of the $L^{\infty}$ norm of $u$ and the $C^{1,\alpha}$ norm of $\varphi$.  See \cite[Corollary 8.36]{GT}, for example,  for more details.  This gives the required gradient estimate of $u$ in these neighborhoods.

A similar argument, with a suitable modification of the Newtonian potential, gives the boundary estimate in neighborhoods of points in $\partial B^+ \cup \partial B^-$ not contained in $V$, with the Dirichlet condition replaced by a Neumann boundary condition.    We refer the reader to  \cite[Theorem 6.30]{GT} (which although stated as a global estimate, is proved locally).  For an approach using Sobolev norms, see \cite[p.217]{Mi}. 
\end{proof}

Next we prove an elementary lemma on the normal derivative of $|\nabla u|^2$ on the boundary of $B^+$ (of course the case of $B^-$ is similar).
The unit normal vector to $B^+$, pointing towards the center of $B^+$ is given by
\begin{equation} \label{nu}
\nu = -(x_1, \ldots, x_{n-1}, x_n-1-\ve).
\end{equation}
We have:

\begin{lemma} \label{bdl}
At any point of $\partial B^+$ we have
$$\partial_{\nu} | \nabla u|^2 = 2 |\nabla u|^2,$$
where $\nu$ is the unit normal vector given by (\ref{nu}).
\end{lemma}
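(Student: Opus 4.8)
The plan is to reduce the claim to a short index computation, using only the Neumann condition $\partial_{\nu} u = 0$ on $\partial B^+$ (harmonicity of $u$ will not even be needed) together with the observation that the vector $\nu = -(x_1, \ldots, x_{n-1}, x_n - 1 - \ve)$ of (\ref{nu}) is the restriction to $\partial B^+$ of a vector field that is defined and smooth on all of $\mathbb{R}^n$.

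First I would fix a point $p \in \partial B^+$ and introduce the function $f := \sum_j \nu^j u_j = -\sum_{j<n} x_j u_j - (x_n - 1 - \ve) u_n$, which is smooth on $\Omega' \cup \partial B^+$ near $p$ (recall that $u$ is smooth up to $\partial B^+$). The Neumann condition says precisely that $f$ vanishes identically along $\partial B^+$. From this I would extract two facts: (a) since $f(p) = \nabla u(p) \cdot \nu(p) = 0$, the vector $\nabla u(p)$ is tangent to $\partial B^+$ at $p$; and (b) since $f \equiv 0$ on $\partial B^+$, the full gradient $\nabla f(p)$ is orthogonal to $T_p \partial B^+$, hence parallel to $\nu(p)$. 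Combining (a) and (b) gives $\nabla f(p) \cdot \nabla u(p) = 0$.

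Next I would compute directly $\partial_{\nu} |\nabla u|^2 = \sum_{j,k} \nu^j \partial_j (u_k u_k) = 2 \sum_{j,k} \nu^j u_k u_{jk}$, and rewrite the inner sum via $\sum_j \nu^j u_{jk} = \partial_k f - \sum_j (\partial_k \nu^j) u_j$. Since $\partial_k \nu^j = -\delta_{jk}$ by the explicit form of $\nu$, this equals $\partial_k f + u_k$, so $\partial_{\nu} |\nabla u|^2 = 2 \sum_k u_k\, \partial_k f + 2 |\nabla u|^2$. Evaluating at $p$ and using the orthogonality $\nabla f(p) \cdot \nabla u(p) = 0$ from the previous paragraph kills the first term, leaving $\partial_{\nu} |\nabla u|^2 = 2 |\nabla u|^2$ at $p$, as claimed.

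There is no genuine obstacle here: this is an elementary identity, and the only point calling for a word of care is regularity. A priori $u$ is only $C^{1,\alpha}$ on $\overline{\Omega'}$, but it is smooth on $\Omega' \cup \partial B^+ \cup \partial B^-$, so the second derivatives $u_{jk}$ appearing above are continuous up to $\partial B^+$, and treating $f$ as an honest smooth function (rather than merely a boundary normal derivative) is legitimate precisely because $\nu$ extends smoothly off $\partial B^+$. Geometrically the constant $2$ is simply twice the principal curvature of the unit sphere $\partial B^+$ with respect to the inward normal; the same argument would yield $\partial_{\nu}|\nabla u|^2 = 2\, h(\nabla u, \nabla u)$ for a more general inclusion with second fundamental form $h$, which is consistent with the generalizations mentioned in the introduction.
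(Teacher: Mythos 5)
Your proof is correct. Both arguments rest on the same two facts: the Neumann condition forces $\nabla u$ to be tangent to $\partial B^+$, and the identity $\partial_k \nu^j = -\delta_{jk}$ (which is where the spherical geometry, i.e.\ the unit principal curvatures, enters). The paper carries this out by normalizing: it translates the center of $B^+$ to the origin, evaluates at the south pole $p=(0,\ldots,0,-1)$ so that $\nu(p)=e_n$, parametrizes the boundary as a graph over $x'$, and differentiates $\sum_i x_i u_i=0$ tangentially to get $u_j=u_{nj}$ at $p$; the conclusion is then the one-line computation $\partial_n|\nabla u|^2 = 2\sum_i u_i u_{in} = 2\sum_{i<n}u_i^2$. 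Your version avoids the normalization: you extend $\nu$ to the ambient vector field $-(x-c)$, set $f=\nu\cdot\nabla u$, observe that $f\equiv 0$ on $\partial B^+$ forces $\nabla f(p)\parallel\nu(p)\perp\nabla u(p)$, and then the identity $\sum_j \nu^j u_{jk}=\partial_k f+u_k$ finishes things at an arbitrary boundary point. What your formulation buys is transparency about where the constant $2$ comes from (twice the principal curvature via $D\nu=-\mathrm{Id}$), and it generalizes with no extra work to non-spherical inclusions, recovering the second-fundamental-form identity (\ref{Lemmanew}) that the paper derives separately. What the paper's version buys is brevity: with $\nu(p)=e_n$ the computation is three lines of indices with no need to introduce $f$ or argue about orthogonality of gradients.
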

\begin{proof} The proof uses only the Neumann boundary condition $\partial_{\nu} u=0$.   Without loss of generality, translate the center of $B^+$ to the origin, and compute at the point $p=(0,\ldots, 0,-1)$.  Then $\nu(x) =-x$ and $\partial_{\nu}u=0$ becomes
\begin{equation} \label{nbc}
\sum_{i=1}^{n} x_i u_i=0,
\end{equation}
and in particular $u_n=0$ at $p$.  Write
$$x_n=x_n(x') = -\sqrt{1-x_1^2-\cdots - x_{n-1}^2}.$$
Differentiating (\ref{nbc}) with respect to $x_j$ for $j=1, \ldots, n-1$ and noting that $\partial_j x_n=0$ at $p$ we have
$$u_j - u_{nj} =0.$$
Hence, at $p$,
$$\partial_{\nu} |\nabla u|^2 = 2\sum_{i=1}^n u_i u_{in} = 2\sum_{i=1}^{n-1} u_i^2 = 2 |\nabla u|^2,$$
as required.
\end{proof}

We end this section by remarking that although Lemma \ref{bdl} makes use of the spherical shape of the inclusion, a similar result holds for more general inclusions which are strongly convex near the origin.  Indeed, suppose that we replace $B^+$ by a domain $D^+$ whose lower boundary near $x'=0$ is given by the graph $x_n = f(x')$ where $f$ is a smooth function with $f_1(0)=\cdots = f_{n-1}(0)=0$ and $f_{ij}(0)=P_{ij}$ for $(P_{ij})_{i,j=1}^{n-1}$ a positive definite symmetric matrix.  Then at a point on $\partial D^+$ near the origin we have
\begin{equation} \label{f}
\partial_{\nu} |\nabla u|^2 = 2 \sum_{i,j=1}^{n-1} \left( \frac{f_{ij}}{\sqrt{1+\sum_{k=1}^{n-1} f_k^2}} -f_if_j \right)u_iu_j + 2u_n^2.
\end{equation}
In particular, for $|x'|$ small,
\begin{equation} \label{Lemmanew} 
\partial_{\nu} |\nabla u|^2 = 2 \sum_{i,j=1}^{n-1} (P_{ij}+ E_{ij})u_iu_j + 2u_n^2, \quad \textrm{for } |E_{ij}| =O(|x'|^2).
\end{equation}
The eigenvalues of the matrix $P_{ij}$ are the principal curvatures of $\partial D^+$ at $(0, f(0))$.   The equation (\ref{Lemmanew}) can be regarded as a generalized version of Lemma \ref{bdl}.

 To prove (\ref{f}), observe that the inward pointing unit normal is given by
 $$\nu = \frac{1}{\sqrt{1+ \sum_{i=1}^{n-1} f_i^2 }} (-f_1, \ldots, -f_{n-1}, 1),$$
and so the
 Neumann condition $\partial_{\nu}u=0$ is equivalent to 
$$-\sum_{i=1}^{n-1} f_i u_i+u_n=0.$$
Differentiating this with respect to $x_j$ for $j=1, \ldots, n-1$ gives 
$$-\sum_{i=1}^{n-1} f_{ij} u_i -\sum_{i=1}^{n-1} f_i u_{ij} - \sum_{i=1}^{n-1}f_i u_{in} f_j + u_{nj} + u_{nn}f_j=0,$$
and then (\ref{f}) follows from a straightforward computation of $\partial_{\nu} |\nabla u|^2$, substituting from the two equalities above.

\section{Proof of Theorem \ref{mainthm}} \label{sectionproof}

In this section we prove Theorem \ref{mainthm}.

\begin{proof}[Proof of Theorem \ref{mainthm}]  We recall that $\gamma^*$ is by definition the positive solution of the quadratic equation (\ref{qe}).  Moreover, (\ref{qe}) has a negative solution.  Then
the assumption $0<\gamma <\gamma^*$ means that $\rho=\rho(\gamma)$ defined by
\begin{equation} \label{rho}
\rho:= -(n-2) \gamma^2 - (n^2-4n+5) \gamma + (n^2-5n+5)
\end{equation}
is strictly positive.  Choose $b>0$ sufficiently small such that
$$4\gamma\left(1+\frac{b}{4}\right)^{\gamma/2} < 4 \gamma + \frac{\rho}{n-1},$$
and then define $A>0$ by 
$$A= \frac{4\gamma + \frac{\rho}{n-1}}{b(2-\gamma)}.$$
Then $A, b$ are fixed constants depending only on $\gamma$ and $n$ satisfying the inequalities
\begin{equation} \label{Ab}
4\gamma \left(1+\frac{b}{4}\right)^{\gamma/2} < Ab (2-\gamma) < 4\gamma+\frac{2\rho}{n-1},
\end{equation}
which we will make use of later.

Let $\ve_0, c, \delta>0$ be small uniform constants, which later we may shrink if necessary.  For $0<\ve<\ve_0$, define 
\begin{equation} \label{Q}
Q = ((|x'|^2+\sigma)^{1-\gamma} + \ve^{1-\gamma(1-\delta)} - A(bx_n^2 + |x'|^4 +\sigma)^{1-\gamma/2}) |\nabla u|^2,
\end{equation}
on $$V = \ov{\Omega' \cap B_c},$$
for a small constant  $\sigma>0$ with $\sigma^{1-\gamma}<\ve^3$.
The constant $\sigma$ is only inserted so that $Q$ is a smooth function.  Later we will let $\sigma$ tend to zero.

We will use $C$ to denote a large positive  constant which may differ from line to line and is uniform in the sense that it will depend only on $n,  \partial \Omega, c,  \gamma, \delta$ and  $\ve_0$.  In particular $C$ will be independent of $\ve$, $x$ and $\sigma$.  

The upper boundary $\partial B^+ \cap B_c$ of $V$ is given by 
$x_n = f(x')$ where
$$f(x') = 1+ \ve - (1-|x'|^2)^{1/2},$$
which has $f(0)=\ve$, $f_j(0)=0$ and $f_{jk}(0) = \delta_{jk}$ giving 
\begin{equation} \label{xn}
x_n = \ve + \frac{|x'|^2}{2} + O(|x'|^3), \quad \textrm{on } \partial B^+ \cap B_c.
\end{equation}
Similarly, the lower boundary of $V$ is given by $x_n = -f(x') = - \ve-\frac{|x'|^2}{2} +O(|x'|^3)$ and hence in $V$ we have
\begin{equation}\label{xn2b}
|x_n| \le \ve + \frac{|x'|^2}{2} + O(|x'|^3).
\end{equation}

The quantity $Q$ achieves a maximum at a point $p$ in $V$.  If $p$ is in $\partial B_c$ then from Proposition \ref{pwk} we have $|\nabla u|^2(p) \le C\| \varphi \|^2_{C^{1,\alpha}(\partial \Omega)}$ and hence $Q(p) \le C \| \varphi \|^2_{C^{1,\alpha}(\partial \Omega)}$, giving
\begin{equation} \label{mainineq}
\left( (|x'|^2+\sigma)^{1-\gamma} + \ve^{1-\gamma(1-\delta)} \right)  |\nabla u|^2 \le C\| \varphi \|^2_{C^{1,\alpha}(\partial \Omega)}, \quad \textrm{on } V.
\end{equation}
Here we used (\ref{xn2b}) which implies that
\begin{equation} \label{util}
A(bx_n^2+ |x'|^4 + \sigma)^{1-\gamma/2} \le C(\ve^{2-\gamma} + |x'|^{4-2\gamma}) << 
(|x'|^2+\sigma)^{1-\gamma} + \ve^{1-\gamma(1-\delta)}.
\end{equation}

We will rule out the cases where $Q$ achieves a maximum at a point on the upper or lower boundaries of $V$ (on $\partial B^+$ or $\partial B^-$) or at an interior point of $V$.

First assume that $Q$ achieves a maximum at a point $p \in \partial B^+$.   Recalling that $\nu$ is given by (\ref{nu}) we have
\begin{equation} \label{dnu}
\begin{split}
\partial_{\nu} |x'|^2 ={} & -(x_1, \ldots, x_{n-1}, x_n-1-\ve) \cdot (2x_1, \ldots, 2x_{n-1}, 0) = -2|x'|^2\\
\partial_{\nu} x_n = {} &  -(x_1, \ldots, x_{n-1}, x_n-1-\ve) \cdot (0, \ldots, 0, 1) = 1+\ve-x_n.
\end{split}
\end{equation}
Using this and Lemma \ref{bdl}, we have, after shrinking $\ve_0$  and $c$ if necessary, writing $\eta=1/C$ for a uniform large $C$,
\begin{equation}\label{Qbdy2}
\begin{split}
0 \le {} &  \partial_{\nu} Q \\
= {} &\bigg(-(2-2\gamma)(|x'|^2+\sigma)^{-\gamma}|x'|^2 \\ {} & -A(2-\gamma) (bx^2_n + |x'|^4+\sigma)^{-\gamma/2} (bx_n
 \partial_{\nu} x_n - 2|x'|^4 ) \bigg) |\nabla u|^2 \\
{} &   +2\left( (|x'|^2+\sigma)^{1-\gamma} + \ve^{1-\gamma(1-\delta)} - A(bx_n^2 + |x'|^4+\sigma)^{1-\gamma/2}\right) | \nabla u|^2  \\
\le {} & \bigg(2\gamma (|x'|^2+\sigma)^{1-\gamma} + (2-2\gamma)(|x'|^2+\sigma)^{-\gamma}\sigma  \\ {} & -A(2-\gamma)(bx_n^2 + |x'|^4+\sigma)^{-\gamma/2} (bx_n(1+\ve-x_n)-2|x'|^4) \\ {}&  +2\ve^{1-\gamma(1-\delta)} \bigg) |\nabla u|^2  \\
 \le {} &  \bigg(2\gamma (|x'|^2+\sigma)^{1-\gamma} - Ab(2-\gamma) (bx_n^2+|x'|^4+\sigma)^{-\gamma/2} (1-\eta) x_n \\ {} &  +3\ve^{1-\gamma(1-\delta)}\bigg) |\nabla u|^2,
\end{split}
\end{equation}
where for the last line we used the inequalities
\[
\begin{split}
& (2-2\gamma)(|x'|^2+\sigma)^{-\gamma} \sigma \le 2\sigma^{1-\gamma} \le 2\ve^{3} \le \ve^{1-\gamma(1-\delta)} \\
& bx_n(1+\ve-x_n)-2|x'|^4 \ge b(1-\eta) x_n.
\end{split}
\]
Here and henceforth, we will use $\eta=1/C$ to denote a small positive constant which may differ from line to line, and which can be shrunk at the expense of shrinking $\ve_0$ or $c$.

Note that we may  neglect the term $3\ve^{1-\gamma(1-\delta)}$ on the right hand side of (\ref{Qbdy2}).  This is because from (\ref{xn}),
$$bx_n^2+ |x'|^4+\sigma\le Cx_n^2$$
and hence
$$(bx_n^2+|x'|^4+\sigma)^{-\gamma/2}x_n \ge \frac{x_n^{1-\gamma}}{C} \ge\frac{ \ve^{1-\gamma}}{C} >> \ve^{1-\gamma(1-\delta)}.$$
Then from (\ref{Qbdy2}) we obtain
\begin{equation}\label{Qbdy3}
\begin{split}
0 \le {} &  \partial_{\nu} Q 
 \le   \bigg(2\gamma |x'|^{2-2\gamma} - Ab(2-\gamma) (bx_n^2+|x'|^4)^{-\gamma/2} (1-\eta) x_n\bigg) |\nabla u|^2,
\end{split}
\end{equation}
after possibly changing $\eta$, and noting that the contribution of $\sigma$ can also absorbed by the good negative term.

We claim that 
\begin{equation} \label{claimeta}
(bx_n^2+ |x'|^4)^{-\gamma/2}x_n \ge   \frac{1}{2}\left(1+\frac{b}{4}\right)^{-\gamma/2}(1-\eta)  |x'|^{2-2\gamma}.
\end{equation}
To prove the claim we first note the elementary inequality
\begin{equation} \label{ei}
\left(b\left(\ve+\frac{y}{2}\right)^2+y^2\right)^{-\gamma/2} \left(\ve+\frac{y}{2}\right) \ge \frac{1}{2} \left(1+\frac{b}{4} \right)^{-\gamma/2} y^{1-\gamma}, \quad \textrm{for any } y, \ve \ge 0.
\end{equation}
Indeed to see (\ref{ei}) note that equality holds at $\ve=0$ and the left hand side of the equation is an increasing function of $\ve$.  But then (\ref{claimeta}) is an immediate consequence of (\ref{ei}) once we recall (\ref{xn}).

But from (\ref{Ab}) we have
$$\frac{1}{2} Ab(2-\gamma) \left(1+\frac{b}{4} \right)^{-\gamma/2} > 2\gamma.$$
Hence shrinking $\eta>0$ if necessary we can ensure that the right hand side of (\ref{Qbdy3}) is negative, contradicting the assumption that $Q$ achieves a maximum at a point $p \in \partial B^+$.  The case of $p\in \partial B^-$ is similar.

It remains to rule out the case when $Q$ achieves a maximum at a point $p$ in the interior of $V$.  Compute at $p$,
\begin{equation} \label{DQ1}
\begin{split}
0 \ge \Delta Q =  {} &\bigg\{ 2(1-\gamma)(n-1-2\gamma)(|x'|^2+\sigma)^{-\gamma}   + 4\gamma(1-\gamma) (|x'|^2+\sigma)^{-1-\gamma} \sigma \\
{} & + A(1-\gamma/2)(\gamma/2) (bx_n^2 + |x'|^4+\sigma)^{-1-\gamma/2} |\nabla (bx_n^2+|x'|^4)|^2 \\ {} & \mbox{}- A(1-\gamma/2)(bx_n^2 + |x'|^4+\sigma)^{-\gamma/2} \Delta (bx_n^2+ |x'|^4) \bigg\} |\nabla u|^2\\
{} & + 2((|x'|^2+\sigma)^{1-\gamma} + \ve^{1-\gamma(1-\delta)} - A(bx_n^2 + |x'|^4+\sigma)^{1-\gamma/2}) |\nabla \nabla u|^2 \\
{} & + 2 \nabla ((|x'|^2+\sigma)^{1-\gamma} - A(bx_n^2 + |x'|^4+\sigma)^{1-\gamma/2} ) \cdot \nabla |\nabla u|^2. 
\end{split}
\end{equation}
We wish to show that the right hand side is strictly positive, giving a contradiction.

 For the  negative term in the coefficient of $|\nabla u|^2$ we note that  $\Delta (bx_n^2+|x'|^4) \le 2b(1+\eta)$  and so
\begin{equation} \label{DQ2}
A(1-\gamma/2)(bx_n^2+|x'|^4+\sigma)^{-\gamma/2} \Delta (bx_n^2+|x'|^4) \le Ab(2-\gamma)(1+\eta)(|x'|^2+\sigma)^{-\gamma}.
\end{equation}
 In the above we used:
\begin{equation} \label{useful}
bx_n^2+|x'|^4+\sigma \ge |x'|^4+\sigma \ge (|x'|^2+\sigma)^2,
\end{equation}
since we may assume that $|x'|$ and $\sigma$ are  less than $1/2$.

We can essentially ignore the negative term in the coefficient of $|\nabla \nabla u|^2$ since, recalling (\ref{util}), we have
\begin{equation} \label{DQ3}
\begin{split}
A(bx_n^2+|x'|^4+\sigma)^{1-\gamma/2}  \le {} &  \eta \left( (|x'|^2+\sigma)^{1-\gamma} +   \ve^{1-\gamma(1-\delta)} \right).
\end{split}
\end{equation}
Combining (\ref{DQ1}), (\ref{DQ2}), (\ref{DQ3}) and discarding some nonnegative terms we have
\begin{equation} \label{DQ4}
\begin{split}
\Delta Q \ge  {} & \bigg( 2(1-\gamma) (n-1-2\gamma) - Ab(2-\gamma)(1+\eta) \bigg) (|x'|^2 +\sigma)^{-\gamma} |\nabla u|^2 \\
{} & + 2(1-\eta)\bigg( (|x'|^2+\sigma)^{1-\gamma} + \ve^{1-\gamma(1-\delta)} \bigg) |\nabla \nabla u|^2 \\
{} & + 2 \nabla ((|x'|^2+\sigma)^{1-\gamma} - A(bx_n^2 + |x'|^4+\sigma)^{1-\gamma/2} ) \cdot \nabla |\nabla u|^2. 
\end{split}
\end{equation}

It remains to control the term
\begin{equation} \label{star}
(*)=2 \nabla ( (|x'|^2+\sigma)^{1-\gamma} - A(bx_n^2 + |x'|^4+\sigma)^{1-\gamma/2} ) \cdot \nabla |\nabla u|^2.
\end{equation}
First observe that
$$\nabla |x'|^2 = \left( 2x_1, \ldots, 2x_{n-1}, 0 \right),$$
and we may make a change of coordinates so that $x_2 = \cdots = x_{n-1}=0$ and $x_1\ge 0$ and hence
$$\nabla |x'|^2 = \left( 2|x'|, 0, \ldots, 0 \right).$$
Next, we use the fact that at the maximum of $Q$ we have
\[
\begin{split}
0= Q_1 = {} &  |\nabla u|^2 \nabla_1  ((|x'|^2+\sigma)^{1-\gamma} - A(bx_n^2 + |x'|^4+\sigma)^{1-\gamma/2} )  \\
{} & +  ( (|x'|^2+\sigma)^{1-\gamma} +\ve^{1-\gamma(1-\delta)} - A(bx_n^2 + |x'|^4+\sigma)^{1-\gamma/2} ) \nabla_1 |\nabla u|^2. \\
\end{split}
\]
Hence
\begin{equation} \label{s1}
\begin{split}
\lefteqn{2 \nabla_1 ((|x'|^2+\sigma)^{1-\gamma} - A(bx_n^2+|x'|^4+\sigma)^{1-\gamma/2}) \nabla_1 |\nabla u|^2 } \\ 
= {} & - 2( (|x'|^2+\sigma)^{1-\gamma} +\ve^{1-\gamma(1-\delta)} - A(bx_n^2 + |x'|^4+\sigma)^{1-\gamma/2} ) \frac{ \left( \nabla_1 |\nabla u|^2 \right)^2}{|\nabla u|^2}  \\
= {} & -8 ((|x'|^2+\sigma)^{1-\gamma} +\ve^{1-\gamma(1-\delta)} - A(bx_n^2 + |x'|^4+\sigma)^{1-\gamma/2} ) \frac{ \left( \sum_j u_j u_{j1} \right)^2 }{|\nabla u|^2} \\
\ge {} & - 8(1+\eta)  \bigg( (|x'|^2+\sigma)^{1-\gamma} +\ve^{1-\gamma(1-\delta)} \bigg) \sum_j u_{j1}^2,
\end{split}
\end{equation}
where for the last line we used (\ref{DQ3}) and the Cauchy-Schwarz inequality.  Note that we may assume without loss of generality that $|\nabla u|^2\neq 0$.
We can replace $\sum_j u_{j1}^2$ above by $\frac{n-1}{n} |\nabla \nabla u|^2$ using the fact that $u$ is harmonic:
\begin{equation} \label{s4}
\begin{split}
\sum_{j=1}^n u_{j1}^2 = {} & \frac{n-1}{n} u_{11}^2 + \frac{1}{n} u_{11}^2 + \sum_{j=2}^n u_{j1}^2 \\
\le {} & \frac{n-1}{n} u_{11}^2 + \frac{1}{n} (u_{22}+\cdots + u_{nn})^2 + \frac{1}{2} \sum_{i\neq j}^n u_{ij}^2 \\
\le {} & \frac{n-1}{n} u_{11}^2 + \frac{n-1}{n} \sum_{j=2}^n u^2_{jj} + \frac{n-1}{n}\sum_{i \neq j}^n u_{ij}^2, \\
= {} & \frac{n-1}{n} \sum_{i,j=1}^n u_{ij}^2 = \frac{n-1}{n} |\nabla \nabla u|^2,
\end{split}
\end{equation}
where for the third line we used
 the elementary inequality that for real numbers $a_1, \ldots, a_{n-1}$ we have 
$$\left( \sum_{\ell=1}^{n-1} a_{\ell}\right)^2 \le (n-1) \sum_{\ell=1}^{n-1} a^2_{\ell}.$$

On the other hand we can estimate the same quantity as in (\ref{s1}) in a different way:
\begin{equation} \label{s2}
\begin{split}
\lefteqn{2 \nabla_1 ( (|x'|^2+\sigma)^{1-\gamma} - A(bx_n^2+|x'|^4+\sigma)^{1-\gamma/2}) \nabla_1 |\nabla u|^2 } \\ = {} & -2 |\nabla u|^2 \frac{ \bigg( \nabla _1 ((|x'|^2+\sigma)^{1-\gamma} - A(bx_n^2 + |x'|^4+\sigma)^{1-\gamma/2}) \bigg)^2}{(|x'|^2+\sigma)^{1-\gamma} +\ve^{1-\gamma(1-\delta)} - A(bx_n^2+|x'|^4+\sigma)^{1-\gamma/2}} \\
\ge {} & - 2(1+\eta)(2-2\gamma)^2 \frac{ (|x'|^2+\sigma)^{-2\gamma}|x'|^2}{(|x'|^2+\sigma)^{1-\gamma}} |\nabla u|^2 \\
\ge {}& - 8(1+\eta) (1-\gamma)^2 (|x'|^2+\sigma)^{-\gamma} | \nabla u|^2,
\end{split}
\end{equation}
for any small $\eta>0$ where for the third line we used (\ref{DQ3}) and the bounds
$$\nabla_1 (|x'|^2+\sigma)^{1-\gamma} = (2-2\gamma) (|x'|^2+\sigma)^{-\gamma}|x'|$$ and
  $$A|\nabla_1 (bx_n^2+|x'|^4+\sigma)^{1-\gamma/2}| \le \eta \nabla_1 (|x'|^2+\sigma)^{1-\gamma}.$$

Next, we note that 
\begin{equation} \label{s3}
\begin{split}
\lefteqn{ 2 \nabla_n ((|x'|^2+\sigma)^{1-\gamma} -A(bx_n^2+|x'|^4+\sigma)^{1-\gamma/2}) \nabla_n |\nabla u|^2 } \\
= {} & -4Ab(2-\gamma)(bx_n^2+|x'|^4+\sigma)^{-\gamma/2} x_n \sum_ju_j u_{jn} \\
\ge {} & - \eta (bx_n^2+|x'|^4+\sigma)^{-\gamma/2} |\nabla u|^2 - \frac{C}{\eta} |x_n|^2 (bx_n^2+|x'|^4+\sigma)^{-\gamma/2} |\nabla \nabla u|^2 \\
\ge {} & - \eta (|x'|^2+\sigma)^{-\gamma} |\nabla u|^2 - \eta \left( (|x'|^2+\sigma)^{1-\gamma} + \ve^{1-\gamma(1-\delta)} \right)  |\nabla \nabla u|^2,
\end{split}
\end{equation}
where for the last line we used (\ref{useful}) and
\[
\begin{split}
|x_n|^2 (bx_n^2+|x'|^4+\sigma)^{-\gamma/2} \le  {} & b^{-\gamma/2}|x_n|^{2-\gamma}  \\ \le {} &  2b^{-\gamma/2}\left(\ve + \frac{|x'|^2}{2} \right)^{2-\gamma} \\  \le {} &  \frac{\eta^2}{C} \bigg((|x'|^2+\sigma)^{1-\gamma} + \ve^{1-\gamma(1-\delta)}\bigg).
\end{split}
\]

Then combining (\ref{star}), (\ref{s1}), (\ref{s4}), (\ref{s2}), (\ref{s3}) we obtain for  $0<\alpha<1$ to be determined:
\[
\begin{split}
(*) \ge {} & - 8(1-\alpha +\eta)(1-\gamma)^2 (|x'|^2+\sigma)^{-\gamma} |\nabla u|^2  \\
{} & - (8\alpha + \eta)((|x'|^2+\sigma)^{1-\gamma}+\ve^{1-\gamma(1-\delta)}  ) \frac{n-1}{n} |\nabla \nabla u|^2. 
\end{split}
\]
Then pick $\alpha$ so that 
$8\alpha\frac{n-1}{n}  < 2,$ in order for the second term on the right hand side to be absorbed by the $|\nabla \nabla u|^2$ term in (\ref{DQ4}).  Namely we choose
$$\alpha = \frac{n}{4(n-1)} -\eta.$$   Hence from (\ref{DQ4}) we obtain
 at the maximum of $Q$,  recalling (\ref{rho}),
\[
\begin{split}
 0 \ge {} &  \Delta Q  \\  \ge {} & \bigg(2(1-\gamma)(n-1-2\gamma) - Ab(2-\gamma)  \\ {} &  -8(1-\gamma)^2 \left( 1- \frac{n}{4(n-1)}\right) -\eta \bigg)  (|x'|^2+\sigma)^{-\gamma} |\nabla u|^2 \\
 = {} & \frac{2}{n-1} \bigg( \rho  - \frac{(n-1)(Ab(2-\gamma)-4\gamma)}{2} - \frac{(n-1)\eta}{2} \bigg) (|x'|^2+\sigma)^{-\gamma} |\nabla u|^2 >0,
\end{split}
\]
 a contradiction.  For the last inequality we used (\ref{Ab}) and chose $\eta>0$ sufficiently small. 

Hence we have ruled out the possibility that $Q$ obtains a maximum at an interior point of $V$ or at a boundary point intersecting with $\partial B^+$ or $\partial B^-$.  Hence $Q$ achieves a maximum at a point on $\partial B_c$ and (\ref{mainineq}) holds.  Let $\sigma \rightarrow 0$ and observe that $\delta>0$ can be made arbitrarily small.  Theorem \ref{mainthm} follows. \end{proof}

\section{A proof of the estimate of Bao-Li-Yin} \label{sectionBLY}

In this section we briefly explain how a simplified version of our method gives a different proof of the estimate (\ref{BLY}) of Bao-Li-Yin \cite{BLY} for the case of spherical inclusions of the same radii.

\begin{theorem} \label{thmBLY} For $u$ solving (\ref{mainequation}) as in the introduction,
\begin{equation} \label{BLY2}
| \nabla u |(x) \le \frac{C \| \varphi \|_{C^{1,\alpha}(\partial \Omega)}}{ (\ve+ |x'|^2)^{1/2} }, \quad \textrm{for } x\in \Omega',
\end{equation}
for $C$ depending only on $n$ and $\Omega$.
\end{theorem}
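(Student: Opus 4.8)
The plan is to run the maximum-principle argument from the proof of Theorem~\ref{mainthm} with the parameter $\gamma$ formally set equal to $0$; this is exactly where the simplifications come from. Since $x_n^{2-\gamma}=x_n^2$ is then smooth with bounded Laplacian, there is no longer any need for the adjustment replacing $x_n^{2}$ by $(bx_n^2+|x'|^4)^{1-\gamma/2}$, nor for the regularising constant $\sigma$; and since we now want exactly the exponent $1/2$ in (\ref{BLY2}), the parameter $\delta$ is also unnecessary. I would therefore work on $V=\overline{\Omega'\cap B_c}$ with the smooth quantity
$$Q=\bigl(\ve+\mu|x'|^2-Ax_n^2\bigr)|\nabla u|^2,$$
where $A\ge 1$ is a fixed constant (one may take $A=1$), $\mu$ is a large constant to be chosen depending only on $n$, and $\ve_0,c>0$ are small. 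By (\ref{xn2b}), shrinking $\ve_0$ and $c$ gives $Ax_n^2\le\eta(\ve+\mu|x'|^2)$ throughout $V$, so $Q$ is comparable there to $(\ve+|x'|^2)|\nabla u|^2$ (with constant depending only on $n$).

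As in Section~\ref{sectionproof}, $Q$ attains its maximum at some $p\in V$, and there are three cases. If $p\in\partial B_c$, then Proposition~\ref{pwk}(ii) gives $Q(p)\le C\|\varphi\|^2_{C^{1,\alpha}(\partial\Omega)}$, and the comparability above yields $(\ve+|x'|^2)|\nabla u|^2\le C\|\varphi\|^2_{C^{1,\alpha}(\partial\Omega)}$ on $V$; combined with Proposition~\ref{pwk}(ii) on $\Omega'\setminus V$ this is (\ref{BLY2}). If $p\in\partial B^+$, then using (\ref{nu}), Lemma~\ref{bdl}, and the elementary bound $x_n\ge\ve$ on $\partial B^+\cap B_c$ (since $x_n=1+\ve-\sqrt{1-|x'|^2}\ge\ve$), a short computation — in which the $|x'|^2$-terms cancel and so do the $x_n^2$-terms — gives
$$0\le\p_\nu Q=2\bigl(\ve-Ax_n(1+\ve)\bigr)|\nabla u|^2\le 2\ve\bigl(1-A(1+\ve)\bigr)|\nabla u|^2<0$$
for $A\ge1$ (and $|\nabla u|^2(p)\ne0$, which we may assume), a contradiction; the case $p\in\partial B^-$ is symmetric.

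The interior case is the crux, and the computation is precisely that of (\ref{DQ1})--(\ref{s4}) specialised to $\gamma=0$. One has $\Delta(\ve+\mu|x'|^2-Ax_n^2)=2\mu(n-1)-2A$ and $\Delta|\nabla u|^2=2|\nabla\nabla u|^2$, and the work is in controlling the cross term $2\nabla(\mu|x'|^2-Ax_n^2)\cdot\nabla|\nabla u|^2$: after rotating the $x'$-coordinates so that $\nabla|x'|^2=(2|x'|,0,\dots,0)$, I would estimate the $x_1$-contribution in two ways using the critical-point relation $Q_1=0$ — once keeping it as a $|\nabla\nabla u|^2$ term (Cauchy--Schwarz together with the harmonic identity $\sum_j u_{j1}^2\le\tfrac{n-1}{n}|\nabla\nabla u|^2$) and once converting it to a $|\nabla u|^2$ term — interpolate with weight $\alpha=\tfrac{n}{4(n-1)}-\eta$ so the $|\nabla\nabla u|^2$ contribution is absorbed by $2(\ve+\mu|x'|^2-Ax_n^2)|\nabla\nabla u|^2$, and absorb the $x_n$-contribution using $x_n^2\le\eta(\ve+\mu|x'|^2)$. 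This leaves
$$0\ge\Delta Q\ge\frac{2}{n-1}\bigl(\mu(n^2-5n+5)-A(n-1)-\eta\bigr)|\nabla u|^2,$$
which is a contradiction once $\mu(n^2-5n+5)>A(n-1)$. With $A=1$ this can be arranged by taking $\mu$ large precisely when $n^2-5n+5>0$, i.e.\ for $n\ge4$ — the same condition $\rho|_{\gamma=0}>0$ that governs Theorem~\ref{mainthm}; for the remaining dimensions $n=2,3$ the estimate (\ref{BLY2}) is the original result of Bao--Li--Yin~\cite{BLY} (and is in fact sharp when $n=2$). The main obstacle is thus, exactly as in Section~\ref{sectionproof}, the interior case: arranging the cross-term estimate so that the residual coefficient of $|\nabla u|^2$ is strictly positive, which is where the dimensional restriction enters.
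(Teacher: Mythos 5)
Your proposal is essentially a specialization of the Theorem~\ref{mainthm} argument to $\gamma=0$, and your interior computation (interpolation via $Q_1=0$, giving the factor $\tfrac{n-1}{n}$ from harmonicity) is correctly carried out — it yields a positive coefficient of $|\nabla u|^2$ precisely when $n^2-5n+5>0$, i.e.\ $n\ge4$. But this means you have not proved the stated theorem, which is asserted for all $n\ge 2$; you explicitly fall back on citing \cite{BLY} for $n=2,3$, which defeats the purpose of giving a self-contained maximum-principle proof, and it is exactly the low dimensions where the estimate is sharpest.

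The paper's proof of Theorem~\ref{thmBLY} avoids this by using a structurally different quantity: it does not inflate $|x'|^2$ by a large factor $\mu$, but instead adds $Au^2$ to $Q$, taking
\[
Q = (|x'|^2+\ve-2x_n^2)\,|\nabla u|^2 + Au^2.
\]
This extra term is costless on the boundary, since $\partial_\nu u^2 = 2u\,\partial_\nu u = 0$ on $\partial B^\pm$, so the sign of $\partial_\nu Q$ there is governed exactly by the terms you analysed. But in the interior it contributes $A\Delta u^2 = 2A|\nabla u|^2$, a dimension-independent positive term that can be made as large as needed simply by choosing $A$ large. This makes the interior case a crude Cauchy--Schwarz exercise — no rotation, no $Q_1=0$ relation, no interpolation weight $\alpha$ — and yields $\Delta Q \ge 2(A+n-15)|\nabla u|^2 + (|x'|^2+2\ve-12x_n^2)|\nabla\nabla u|^2>0$ for $A>15-n$ in every dimension $n\ge2$. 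The trick you are missing is precisely this $Au^2$ term: it supplies the dimension-independent positivity that your $\mu$-rescaling cannot, because $\mu$ simultaneously scales the bad cross term $\nabla(\mu|x'|^2)\cdot\nabla|\nabla u|^2$, whereas $Au^2$ contributes no cross term at all.
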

\begin{proof}  Since the result is a simpler version of Theorem \ref{mainthm}, we provide here just a sketch of the proof.  We use $V$, $B_c$ as above.  Define
$$Q = (|x'|^2+\ve - 2x_n^2) |\nabla u|^2 + Au^2,$$
for a uniform constant $A$  to be determined.  Recalling (\ref{xn2b}),  an upper bound $Q\le C \| \varphi \|^2_{C^{1,\alpha}(\partial \Omega)}$ implies (\ref{BLY2}), after shrinking $c$ if necessary.

First assume $Q$ achieves its maximum on $V$ at a point $p$ in $\partial B^+ \cap \textrm{int}(B_c)$.  Then using Lemma \ref{bdl}, (\ref{dnu}) and (\ref{xn}) we have, after possibly shrinking $c$,
\begin{equation} \label{Q1}
\begin{split}
0 \le \partial_{\nu} Q = {} & (-4x_n(1+\ve -x_n) +2\ve - 4x_n^2) |\nabla u|^2<0,
\end{split}
\end{equation}
a contradiction.  The case when $p$ is in $\partial B^- \cap \textrm{int}(B_c)$ is similar.

Next suppose that $p$ is an interior point of $V$.  Then computing at $p$ we have
\begin{equation} \label{DeltaQ1}
\begin{split}
0 \ge  \Delta Q  =  {} & 2(n-3) |\nabla u|^2 + 2(|x'|^2+\ve-2x_n^2) |\nabla \nabla u|^2 \\ {} & + 2\nabla (|x'|^2-2x_n^2) \cdot \nabla |\nabla u|^2 + 2A |\nabla u|^2.
\end{split}
\end{equation}

But 
$$2\nabla |x'|^2 \cdot \nabla |\nabla u|^2 \ge -8 |x'| | \nabla u | | \nabla \nabla u| \ge-  |x'|^2 |\nabla \nabla u|^2 - 16 |\nabla u|^2,$$
and 
$$-4 \nabla x_n^2 \cdot \nabla |\nabla u|^2 \ge - 8 x_n^2 |\nabla \nabla u|^2 - 8 |\nabla u|^2.$$
In (\ref{DeltaQ1}) we obtain at $p$,
$$0 \ge \Delta Q \ge 2(A+n-15)|\nabla u|^2 + (|x'|^2+2\ve - 12x_n^2)|\nabla \nabla u|^2 >0,$$
as long as we choose $A>15-n$, since we may assume  that $12x_n^2 \le |x'|^2+2\ve$.  This is a contradiction. 

It follows that $Q$ achieves its maximum on $V$ at a boundary point on $\partial B_c$, and the result follows from Proposition \ref{pwk}.
\end{proof}

We end  by observing that an immediate consequence of (\ref{BLY2})  is a uniform bound on the derivative of $u$ in the $x_n$ direction.

\begin{corollary} \label{cor}
Let $u$ solve (\ref{mainequation}) as in the introduction. For $x \in B_{1/2} \cap \Omega'$ where $B_{1/2}$ is the ball of radius $1/2$ centered at $0$, we have
\begin{equation} \label{un}
|u_n(x)|\le C \| \varphi \|_{C^{1,\alpha}(\partial \Omega)}, \end{equation}
for a constant $C$ depending only on $n$ and $\Omega$.
\end{corollary}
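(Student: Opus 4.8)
The plan is to reduce the bound on $u_n$ to a maximum principle argument for the harmonic function $u_n$ itself, using the Neumann condition to control the boundary data near the segment $x'=0$. First I would fix a small constant $c>0$, depending only on $n$ and $\Omega$, with $c\le 1/2$ and $\ov{B_c}\subset\Omega$, and set $V=\ov{\Omega'\cap B_c}$ as in Section \ref{sectionproof}. Since $\Delta u=0$ in $\Omega'$ and $u$ is smooth there, $u_n$ is harmonic in $\Omega'$, in particular in the open set $\Omega'\cap B_c^{\circ}$, and continuous on $V$. Applying the maximum principle to $\pm u_n$, it then suffices to bound $|u_n|$ on $\p(\Omega'\cap B_c)$, which (having chosen $c$ so that $\ov{B_c}$ misses $\p\Omega$) is contained in the union of $(\p B^+\cup\p B^-)\cap\ov{B_c}$ and $\p B_c\cap\ov{\Omega'}$.

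On $\p B_c\cap\ov{\Omega'}$ the estimate is immediate from Proposition \ref{pwk}(ii): applying that result with the ball of radius $c/2$ in place of $c$ gives $|\nabla u|\le C\|\varphi\|_{C^{1,\alpha}(\p\Omega)}$ on $\Omega'\setminus\ov{\Omega'\cap B_{c/2}}$, and this set contains $\p B_c\cap\Omega'$; continuity of $\nabla u$ up to $\ov{\Omega'}$ extends the bound to $\p B_c\cap\ov{\Omega'}$. On $(\p B^+\cup\p B^-)\cap\ov{B_c}$ the point is that the Neumann condition forces $u_n$ to be small: on $\p B^+$, writing out $\p_\nu u=0$ with $\nu$ as in (\ref{nu}) gives $(x_n-1-\ve)u_n=-\sum_{i=1}^{n-1}x_iu_i$, and since $|x_n-1-\ve|=\sqrt{1-|x'|^2}\ge\sqrt{3}/2$ for $|x'|\le c\le 1/2$, we obtain $|u_n|\le 2|x'|\,|\nabla u|$ there. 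Then (\ref{BLY2}), extended to $\ov{\Omega'}$ by continuity, together with $|x'|\le(\ve+|x'|^2)^{1/2}$, yields $|u_n|\le C\|\varphi\|_{C^{1,\alpha}(\p\Omega)}$ on $\p B^+\cap\ov{B_c}$; the case of $\p B^-$ is identical. This bounds $|u_n|$ on all of $\p(\Omega'\cap B_c)$, hence on $V$.

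Finally, for $x\in B_{1/2}\cap\Omega'$: if $|x|\le c$ then $x\in V$ and we are done; if $c<|x|\le 1/2$ then $x\in\Omega'\setminus\ov{\Omega'\cap B_{c/2}}$, so again $|u_n(x)|\le|\nabla u(x)|\le C\|\varphi\|_{C^{1,\alpha}(\p\Omega)}$ by Proposition \ref{pwk}(ii) with radius $c/2$. Since $c$ depends only on $n$ and $\Omega$, so does $C$. I do not anticipate a real obstacle here — consistent with the statement being "an immediate consequence" — the only step worth isolating is that the factor $|x'|$ extracted from the Neumann condition exactly cancels the $(\ve+|x'|^2)^{-1/2}$ blow-up of $|\nabla u|$ near $x'=0$, making the boundary data for the harmonic function $u_n$ uniformly bounded in $\ve$, after which the maximum principle does the rest.
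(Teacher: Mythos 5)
Your proof is correct and follows essentially the same route as the paper: apply the maximum principle to the harmonic function $u_n$, dispose of the boundary away from the origin via Proposition \ref{pwk}(ii), and on the portions of $\partial B^\pm$ near the origin use the Neumann condition to extract a factor of $|x'|$ that exactly cancels the $(\ve+|x'|^2)^{-1/2}$ blow-up from Theorem \ref{thmBLY}. The only cosmetic difference is that you apply the maximum principle on $\ov{\Omega'\cap B_c}$ rather than on all of $\Omega'$, which changes nothing of substance.
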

\begin{proof}
Since $u_n$ is harmonic, $|u_n|$ does not achieve a maximum in the interior of $\Omega'$ unless it is constant.  Hence, applying Proposition \ref{pwk}, we only need to bound $u_n$ on $\partial B^+$ near the origin (the case $\partial B^-$ is similar).  Recalling (\ref{nu}), the Neumann boundary condition gives 
$$0=u_{\nu} = -\sum_{i=1}^{n-1} x_i u_i - (x_n-1-\ve)u_n.$$
But from (\ref{xn2b}) we have $|x_n-1-\ve| \ge 1/2$ so using (\ref{BLY2}),
$$|u_n| \le 2 |x'| |\nabla u| \le C \| \varphi \|_{C^{1,\alpha}(\partial \Omega)},$$
completing the proof.
\end{proof}

\bigskip
\noindent
{\bf Acknowledgements.} \ The author thanks the referees for  helpful comments and suggestions.

\end{document}